\def\mapr#1{\stackrel{#1}{\longrightarrow}}
\def\surjd#1{\lower4pt\hbox{$\downarrow$}\kern-5.65pt\Big\downarrow\rlap {$\vcenter{\hbox{$\scriptstyle{{#1}}$}}$}}
\newcommand{\R}{{\mathbb R}}
\newcommand{\Spec}{\text{\it Spec}}
\newcommand{\Pic}{\text{\it Pic}}
\newcommand{\Br}{\text{\it Br}}
\newcommand{\cor}{\text{\it cor}}
\newcommand{\Q}{{\mathbb Q}}
\newcommand{\Z}{{\mathbb Z}}
\newcommand{\G}{{\mathbb G}}
\renewcommand{\O}{{\mathcal{O}}}
\newcommand{\Cl}{\text{\it Cl}}
\newcommand{\p}{{\mathfrak p}}
\newcommand{\sm}{{\smallsetminus}}
\newcommand{\et}{\text{\it et}}
\newcommand{\fl}{\text{\it fl}\,}
\newcommand{\cd}{\text{\it cd}\:}
\newcommand{\Gal}{\text{\it Gal}}
\font\emas = cmsy10 scaled\magstep2
\newcommand{\freeproductmed}{\mathop{\lower.2mm\hbox{\emas \symbol{3}}}\limits}
\newcommand{\lang}{\longrightarrow}
\renewcommand{\min}{\text{\rm min}}
 \newcommand{\ressum}{\mathop{\hbox{${\displaystyle\bigoplus}'$}}\limits}
\newcommand{\nr}{\mathit{nr}}
\newtheoremstyle{alex}
  {}
  {}
  {\sl}
  {}
  {\bf}
  {.}
  {.5em}
  {}
\newtheoremstyle{alexdef}
  {}
  {}
  {\sl}
  {}
  {\bf}
  {.}
  {.5em}
  {}
\theoremstyle{alex}
\newtheorem{theorem}{Theorem}[section]
\newtheorem{lemma}[theorem]{Lemma}
\newtheorem{proposition}[theorem]{Proposition}
\theoremstyle{alexdef}
\title{\bf\boldmath On the $K(\pi,1)$-property for rings of integers in the mixed case}
\author{by Alexander Schmidt}
\begin{document}
\maketitle

\begin{abstract}  We investigate the Galois group $G_S(p)$ of the maximal $p$-extension unramified outside a finite set $S$ of primes of a number field in the (mixed) case, when there are primes dividing~$p$ inside and outside $S$. We show that the cohomology of $G_S(p)$ is `often' isomorphic to the \'{e}tale cohomology of the scheme $\Spec(\O_k \sm S)$, in particular, $G_S(p)$ is of cohomological dimension~$2$ then. We deduce this from the results in our previous paper~\cite{kpi1}, which mainly dealt with the tame case.
\end{abstract}

\section{Introduction}
Let $Y$ be a connected locally noetherian scheme and let $p$ be a prime number. We denote the \'{e}tale fundamental group of $Y$ by $\pi_1(Y)$ and its maximal pro-$p$ factor group by $\pi_1(Y)(p)$. The Hochschild-Serre spectral sequence induces natural homomorphisms
\[
\phi_{i}: H^i(\pi_1^\et(Y)(p), \Z/p\Z) \longrightarrow H^i_\et(Y, \Z/p\Z), \ i\geq 0,
\]
and we call $Y$ a \lq$K(\pi,1)$ for $p$\rq\ if all $\phi_i$ are isomorphisms; see \cite{kpi1} Proposition~2.1 for equivalent conditions. See \cite{wi2} for a purely Galois cohomological approach to the $K(\pi,1)$-property. Our main result is the following

\begin{theorem}\label{thma}
Let $k$ be a number field and let $p$ be a prime number. Assume that $k$ does not contain a primitive $p$-th root of unity and that the class number of~$k$ is prime to $p$. Then the following holds:

Let $S$ be a finite set of primes of $k$  and let $T$ be a set of primes of $k$ of Dirichlet density $\delta(T)=1$.  Then there exists a finite subset $T_1\subset T$ such that
$\Spec(\O_k)\sm (S\cup T_1)$ is a $K(\pi,1)$ for $p$.
\end{theorem}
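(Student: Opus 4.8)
The plan is to reduce the assertion to a purely cohomological statement and then to bootstrap from the tame case treated in \cite{kpi1}. By the list of equivalent conditions in \cite{kpi1}, Proposition~2.1, the scheme $X := \Spec(\O_k)\sm(S\cup T_1)$ is a $K(\pi,1)$ for $p$ precisely when the comparison maps $\phi_i$ are isomorphisms for all $i$. Since $\phi_0$ and $\phi_1$ are always isomorphisms and $\phi_2$ is injective, this amounts to showing that $\phi_2$ is surjective and that $H^i_\et(X,\Z/p\Z)=0$ for $i\geq 3$, equivalently that $\cd G_{S\cup T_1}(p)\leq 2$ together with the surjectivity of $\phi_2$. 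The two standing hypotheses enter here: $\zeta_p\notin k$ forces $H^0(k,\mu_p)=0$, which controls the global Euler--Poincar\'e characteristic and the relevant duality module, while the class number being prime to $p$ guarantees $G_\emptyset(p)=1$, removing any unramified contribution and providing the base of an induction on $S$.

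First I would dispose of the tame part. Writing $S_0 := S\sm S_p$ for the prime-to-$p$ part of $S$ (here $S_p$ denotes the set of primes above $p$), the set $S_0$ contains no $p$-adic prime, so the main theorem of \cite{kpi1} applies: using $\delta(T)=1$ and Chebotarev's theorem to produce primes with prescribed splitting in the finitely many auxiliary extensions that carry the obstruction classes, one obtains a finite subset $T_1\subset T$ such that $X_0 := \Spec(\O_k)\sm(S_0\cup T_1)$ is a $K(\pi,1)$ for $p$. I may always enlarge $T_1$ later within $T$, which keeps it finite, so there is no harm in fixing it provisionally. Since $T_1$ may be chosen to consist of tame primes and $S=S_0\sqcup(S\cap S_p)$, the scheme of interest is obtained from the tame model by deleting finitely many closed points of residue characteristic $p$, namely $X = X_0\sm(S\cap S_p)$.

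It remains to pass from $X_0$ to $X$, i.e.\ to show that allowing wild ramification at the primes $\p\in S\cap S_p$ preserves the $K(\pi,1)$-property. I would analyze the open immersion $X\hookrightarrow X_0$ through the localization (excision) sequence in \'etale cohomology, matched against the sequence relating $G_{S_0\cup T_1}(p)$ and $G_{S\cup T_1}(p)$, the two being compared via their Hochschild--Serre spectral sequences and the five lemma. The local input at each $\p\mid p$ is the comparison between the \'etale cohomology of the henselian trait $\Spec(\O_{k_\p})$ and the Galois cohomology of its generic point $\Spec(k_\p)$, together with the fact that $\cd G_{k_\p}(p)\leq 2$. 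The aim is to verify that removing these characteristic-$p$ points creates no classes in degrees $\geq 3$ and does not destroy the surjectivity of $\phi_2$.

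The main obstacle is exactly this wild local analysis. Unlike the tame case, the decomposition groups at $\p\mid p$ are governed by local duality with the module $\mu_p$, and $\zeta_p$ may well lie in $k_\p$ even though $\zeta_p\notin k$, so these local groups can be of Demushkin type; one must check that their cohomological contribution enters the global comparison without raising the cohomological dimension. I expect to control this by choosing the auxiliary primes in $T_1$ so as to trivialize the relevant local conditions --- again possible because $\delta(T)=1$ --- thereby keeping $\cd G_{S\cup T_1}(p)\leq 2$ and forcing $\phi_2$ to remain surjective. Once this is in place, the comparison of the two spectral sequences shows that all $\phi_i$ are isomorphisms, which is the desired $K(\pi,1)$-property for $X$.
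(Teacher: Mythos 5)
Your overall skeleton matches the paper's: handle the tame part first via the main theorem of \cite{kpi1}, then enlarge the set of ramification by the $p$-adic primes of $S$. But the entire content of the theorem lies in the second step, and there your proposal has a genuine gap. You never identify the precise condition under which one may add primes (wild or not) to an existing $K(\pi,1)$ set, namely: \emph{each prime to be added must not split completely in $k_{S'}(p)$ for the already-constructed $K(\pi,1)$ set $S'$}. The paper secures this \emph{before} invoking the tame theorem: by a separate lemma (Lemma~\ref{nonsplit}, resting on the surjectivity of the global-to-local restriction map $H^1(G_{T\cup S\cup S_p\cup S_\R}(p),\Z/p\Z)\to\prod_{\p\in S\cup S_p\cup S_\R}H^1(k_\p,\Z/p\Z)$ from \cite{NSW}, 9.2.2, plus a finiteness argument) one finds a finite $T_0\subset T$ such that no prime of $S$ splits completely in $k_{T_0}(p)$; only then is the tame theorem (\cite{kpi1}, Theorem~6.2) applied to $T_0$ and $T\sm T_0$, producing $T_2$ with $X\sm(T_0\cup T_2)$ a $K(\pi,1)$, and finally \emph{all} of $S$ is added at once. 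Your phrase ``choosing the auxiliary primes in $T_1$ so as to trivialize the relevant local conditions'' gestures at this but names neither the condition nor how to achieve it; moreover your order of operations (fix $T_1$ from the tame theorem applied to $S\sm S_p$, enlarge later) is delicate, since enlarging $T_1$ afterwards does not automatically preserve the $K(\pi,1)$-property --- that preservation is exactly the point at issue.

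The mechanism you propose for the enlargement step --- comparing the Hochschild--Serre spectral sequences of $G_{S_0\cup T_1}(p)$ and $G_{S\cup T_1}(p)$ over the base via excision and the five lemma --- is also not the one that works, and you concede as much (``the main obstacle is exactly this wild local analysis''). Over $k$ the local terms at $\p\mid p$ are genuinely large (Demushkin-type contributions), and one cannot match them against the change in group cohomology without already knowing the structure of $\Gal(k_{S\cup T_1}(p)|k_{S_0\cup T_1}(p))$. The paper's Proposition~\ref{enlarge} instead works over the universal pro-$p$ covering $(X\sm S')_{k_{S'}(p)}$, where the $K(\pi,1)$-property makes all positive-degree \'{e}tale cohomology vanish; the non-splitting hypothesis together with $\cd G_{S'}(p)<\infty$ forces each decomposition group to be all of $\Z_p\cong\Gal(k_\p^{\nr}(p)|k_\p)$, so the local excision terms reduce to $H^i$ of fields of $p$-cohomological dimension~$1$ and vanish in degrees $\geq 2$. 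This yields that $\Gal(k_{S\cup T_1}(p)|k_{S_0\cup T_1}(p))$ is free pro-$p$ (indeed Riemann's existence theorem), from which the $K(\pi,1)$-property of the larger scheme follows. Without this device, and without the non-splitting lemma feeding into it, your argument does not close.
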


\noindent
{\bf Remarks.} 1. If $S$ contains the set $S_p$ of primes dividing~$p$, then Theorem~\ref{thma} holds with $T_1=\varnothing$ and even without the condition $\zeta_p \notin k$ and $\Cl(k)(p)=0$, see \cite{kpi1}, Proposition~2.3.  In the tame case $S\cap S_p=\varnothing$, the statement of Theorem~\ref{thma} is the main result of~\cite{kpi1}. Here we provide the extension to the \lq mixed\rq\  case $\varnothing \varsubsetneq S\cap S_p \varsubsetneq S_p$.

\smallskip\noindent
2. For a given number field $k$, all but finitely many prime numbers~$p$ satisfy the condition of Theorem~\ref{thma}. We conjecture that  Theorem~\ref{thma} holds without the restricting assumption on  $p$.

\bigskip
Let $S$ be a finite set of places of a number field $k$. Let $k_S(p)$ be the maximal $p$-extension of $k$ unramified outside~$S$ and put  $G_S(p)=\Gal(k_S(p)|k)$. If $S_\R$ denotes the set of real places of $k$, then $G_{S\cup S_\R}(p)\cong \pi_1(\Spec(\O_k)\sm S)(p)$ (we have $G_S(p)=G_{S\cup S_\R}(p)$ if $p$ is odd or $k$ is totally imaginary).  The following Theorem~\ref{extra} sharpens Theorem~\ref{thma}.

\begin{theorem}\label{extra}
The set $T_1\subset T$ in Theorem~\ref{thma} may be chosen such that

\smallskip
\begin{compactitem}
\item[\rm (i)] $T_1$ consists of primes $\p$ of degree $1$  with $N(\p)\equiv 1 \bmod p$,
\item[\rm (ii)] $(k_{S\cup T_1}(p))_\p=k_\p(p)$ for all primes $\p\in S\cup T_1$.
\end{compactitem}
\end{theorem}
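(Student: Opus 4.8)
The plan is to deduce the mixed case from the two situations already settled in \cite{kpi1}: the tame case $S\cap S_p=\varnothing$, which is the main theorem there, and the case $S\supseteq S_p$ recalled in Remark~1. Write $S_0=S\cap S_p$ and $\Sigma=S_p\sm S$, so that in the mixed case $\varnothing\neq\Sigma\subsetneq S_p$. For any finite $T_1\subset T$ disjoint from $\Sigma$, the scheme $\Spec(\O_k)\sm((S\cup S_p)\cup T_1)$ has ramification set containing $S_p$, hence is a $K(\pi,1)$ for $p$ by Remark~1; moreover it is exactly the open subscheme of $X:=\Spec(\O_k)\sm(S\cup T_1)$ obtained by deleting the finitely many closed points $\Sigma$, all lying over $p$. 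Thus the problem becomes: fill the points $\Sigma$ back in, i.e. impose the unramified condition there, and preserve the $K(\pi,1)$-property at the expense of enlarging $T_1$.

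Correspondingly there is a surjection $G_{(S\cup S_p)\cup T_1}(p)\twoheadrightarrow G_{S\cup T_1}(p)$ whose kernel is normally generated by the inertia groups at the primes of $\Sigma$. To compare the two, I would run the localization sequence for the pair $(X,\Sigma)$ in \'etale $\Z/p$-cohomology against the Hochschild--Serre comparison maps $\phi_i$ and the corresponding sequence on the Galois side, whose error terms are the local groups $H^*(G_{k_\p}(p),\Z/p)$ for $\p\in\Sigma$. Using \cite{kpi1} Proposition~2.1 to rephrase the $K(\pi,1)$-property, the task reduces to showing that, after a suitable choice of $T_1$, the global cohomology has the cohomological dimension~$2$ predicted by duality and that the decomposition groups at all primes of $S\cup T_1$ are as large as duality allows. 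Condition~(ii) is precisely this fullness statement: it is the hypothesis that makes the local contributions exact and forces the $\phi_i$ to be isomorphisms.

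It remains to produce $T_1$. Since $\zeta_p\notin k$ the extension $k(\zeta_p)|k$ is nontrivial, so by Chebotarev the primes of degree~$1$ with $N(\p)\equiv 1\bmod p$ have positive Dirichlet density; intersecting with $T$, which has density~$1$, still leaves a set of positive density, so infinitely many such primes lie in $T$. This yields~(i). Among them I would choose finitely many with prescribed Frobenius and inertia behaviour in the relevant auxiliary $p$-extensions, so that each chosen $\p$ ramifies in $k_{S\cup T_1}(p)$ with full local image, which gives $(k_{S\cup T_1}(p))_\p=k_\p(p)$ and hence~(ii) at the tame primes, and so that their joint effect cancels the discrepancy in $H^2$ created by filling in $\Sigma$. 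The assumption $\Cl(k)(p)=0$ removes the global obstruction to realizing these local conditions simultaneously.

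The hard part is the surgery over $p$. For $\Z/p$-coefficients purity fails at the primes of $\Sigma$, so the local terms in the localization sequence are not the naive twists but are governed by the local Galois theory at $p$, where $G_{k_\p}(p)$ is Demushkin; it is here that the mixed case genuinely exceeds the tame one. I expect the core of the argument to be the verification that deleting the wild inertia at each $\p\in\Sigma$ can be compensated by finitely many tame primes of $T$ without raising the cohomological dimension above~$2$, and that simultaneously condition~(ii) can be maintained at the primes $\p\in S_0$, where the decomposition group must surject onto the full Demushkin group $G_{k_\p}(p)$. Concretely this amounts to controlling the global-to-local map in the Poitou--Tate sequence so that it has the rank forced by duality; once this is done, the cohomological dimension is~$2$, the $\phi_i$ are isomorphisms, and (i)--(ii) hold by construction, proving Theorem~\ref{extra}.
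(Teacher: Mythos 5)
Your overall strategy runs in the wrong direction, and the step on which everything depends is left as an expectation rather than proved. You start from $\Spec(\O_k)\sm(S\cup S_p\cup T_1)$, which is indeed a $K(\pi,1)$ because its ramification set contains $S_p$, and then propose to ``fill the points $\Sigma=S_p\sm S$ back in'', compensating the deleted wild inertia by tame primes. But shrinking the ramification set is precisely the intractable direction: the surjection $G_{S\cup S_p\cup T_1}(p)\twoheadrightarrow G_{S\cup T_1}(p)$ kills normally generated inertia subgroups, and no excision or Riemann-existence type argument controls the cohomology of such a quotient. Your own text concedes the point (``I expect the core of the argument to be the verification that deleting the wild inertia at each $\p\in\Sigma$ can be compensated\dots''), and neither the appeal to the Poitou--Tate sequence having ``the rank forced by duality'' nor the assertion that $\Cl(k)(p)=0$ ``removes the global obstruction'' is an argument; these are restatements of what would have to be shown. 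Likewise, achieving $(k_{S\cup T_1}(p))_\p=k_\p(p)$ at a tame prime is a statement about an infinite tower and cannot be guaranteed by prescribing Frobenius or inertia conditions in finitely many auxiliary extensions. So there is a genuine gap at the heart of the proposal, not a technical omission.

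The paper's proof never removes wild ramification; it only ever enlarges the ramification set, which is the tractable direction. After pruning $T$ to degree-one primes with $N(\p)\equiv 1\bmod p$ lying outside $S\cup S_p$ (still of density one over $k(\mu_p)$; this is where (i) comes from), it first produces, by a purely Galois-cohomological argument (Lemma~\ref{nonsplit}, resting on the surjectivity statement \cite{NSW}, 9.2.2), a finite set $T_0\subset T$ such that no prime of $S$ splits completely in $k_{T_0}(p)$. It then invokes the tame-case theorem of \cite{kpi1} (Theorem~6.2 --- this is where the hypotheses $\zeta_p\notin k$ and $\Cl(k)(p)=0$ actually enter, not in any global realization argument) to enlarge $T_0$ to a purely tame set $T_1=T_0\cup T_2$ with $\Spec(\O_k)\sm T_1$ a $K(\pi,1)$. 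Finally it applies Proposition~\ref{enlarge} to adjoin all of $S$, wild primes included, at once. The key observation of the whole paper --- the one your proposal is missing --- is that Proposition~\ref{enlarge} (excision plus Riemann's existence theorem) works verbatim when the newly added primes divide $p$: enlarging the ramification set requires only that the new primes not split completely in the existing $K(\pi,1)$ tower, which is exactly what Lemma~\ref{nonsplit} arranged in advance. Assertion (ii) then comes for free: for $\p\in T_1$ from \cite{kpi1}, Theorem~3, and for $\p\in S$ from Proposition~\ref{enlarge}\,(ii).
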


\noindent
Note that Theorem~\ref{extra} provides nontrivial information even in the case $S\supset S_p$, where assertion (ii) was only known when $k$ contains a primitive $p$-th root of unity (Kuz'min's theorem, see \cite{kuz} or \cite{NSW}, 10.6.4 or \cite{NSW2}, 10.8.4, respectively) and for certain CM fields (by a result of Mukhamedov, see \cite{muk} or \cite{NSW}, X \S6 exercise or \cite{NSW2}, X \S8 exercise, respectively).

\smallskip
By  Theorem~\ref{dualmod} below, Theorem~\ref{extra} provides many examples of $G_S(p)$ being a duality group.  If $\zeta_p\notin k$, this is interesting even in the case that $S\supset S_p$, where examples of $G_S(p)$ being a duality group were previously known only for real abelian fields and for certain CM-fields (see \cite{NSW}, 10.7.15 and \cite{NSW2}, 10.9.15, respectively, and the remark following there).

\bigskip
Previous results in the mixed case had been achieved by K.~Wingberg \cite{wing}, Ch.\ Maire \cite{maire} and D.\ Vogel \cite{Vo}. Though not explicitly visible in this paper, the present progress in the subject was only possible due to the results on mild pro-$p$ groups obtained by J.\ Labute in \cite{La}.

\bigskip
I would like to thank K.~Wingberg for pointing out that the proof of Proposition~8.1 in my paper \cite{kpi1} did not use the assumption that the sets $S$ and $S'$ are disjoint from $S_p$. This was the key observation for the present paper.
The main part of this text was written while I was a guest at the Department of Mathematical Sciences of Tokyo University and of the Research Institute for Mathematical Sciences in Kyoto. I want to thank these institutions for their kind hospitality.

\section{Proof of Theorems~\ref{thma} and \ref{extra}}

We start with the observation that the proofs of Proposition~8.2 and Corollary~8.3 in \cite{kpi1} did not use the assumption that the sets $S$ and $S'$ are disjoint from $S_p$. Therefore, with the same proof (which we repeat for the convenience of the reader) as in loc.\ cit., we obtain

\begin{proposition} \label{enlarge} Let $k$ be a number field and let $p$ be a prime number. Assume $k$ to be totally imaginary if $p=2$. Put $X=\Spec(\O_k)$ and
let $S\subset S'$ be finite sets of primes of $k$.  Assume that $X\sm S$ is a $K(\pi,1)$ for $p$ and that $G_S(p)\neq 1$. Further assume that each $\p\in S'\sm S$ does not split completely in $k_S(p)$. Then the following hold.

\smallskip
\begin{compactitem}
\item[{\rm (i)}] $X\sm S'$ is a $K(\pi,1)$ for $p$.
\item[{\rm (ii)}] $k_{S'}(p)_\p=k_\p(p)$ for all $\p \in S'\sm S$.
\end{compactitem}

\smallskip\noindent
Furthermore, the arithmetic form of Riemann's existence theorem holds, i.e., setting $K=k_S(p)$, the natural homomorphism
\[
\freeproductmed_{{\mathfrak p} \in S'\backslash S(K)} T(K_\p(p)|K_\p) \longrightarrow \Gal(k_{S'}(p)|K)
\]
is an isomorphism. Here $T(K_\p(p)|K_\p)$ is the inertia group and $\freeproductmed$ denotes the free pro-$p$-product of a bundle of pro-$p$-groups, cf.\ \cite{NSW}, Ch.\,IV,\,\S3. In particular, $\Gal(k_{S'}(p)|k_S(p))$ is a free pro-$p$-group.
\end{proposition}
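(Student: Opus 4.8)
The plan is to prove the three assertions together, since Riemann's existence theorem in the form stated is really the engine that drives (i) and (ii). Let me set up notation: write $K = k_S(p)$, let $G = G_S(p) = \Gal(K|k)$, and let $N = \Gal(k_{S'}(p)|K)$ be the kernel of the surjection $G_{S'}(p) \twoheadrightarrow G$. The first and main step is to identify $N$. The field $k_{S'}(p)$ is the maximal $p$-extension of $k$ unramified outside $S'$, so over the base $K$ it is the maximal $p$-extension of $K$ unramified outside the primes of $K$ lying above $S'$. Since $K$ is already the maximal $p$-extension unramified outside $S$, no further ramification above $S$ is possible inside the pro-$p$ world, so $k_{S'}(p)|K$ is unramified outside the primes of $K$ above $S' \sm S$, and in fact it is the maximal such extension. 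The key technical input is that $K$, being infinite and containing $k_S(p)$, has no nontrivial unramified-outside-$(S'\sm S)$ obstruction beyond the inertia contributions — this is where one invokes that $X \sm S$ is a $K(\pi,1)$ for $p$ with $G_S(p) \neq 1$, so that $\cd_p G_S(p) \le 2$ and the relevant global cohomology of $K$ vanishes.

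The second step is to verify the freeness and the free-product decomposition. The hypothesis that each $\p \in S' \sm S$ does not split completely in $K = k_S(p)$ means that the decomposition group of $\p$ in $G_S(p)$ is infinite; consequently each prime $\P$ of $K$ above such a $\p$ has local degree tending to infinity, so the local field $K_\P$ already realizes a large piece of $k_\p(p)$. The inertia groups $T(K_\P(p)|K_\P)$ are the local contributions, and the arithmetic Riemann existence theorem asserts that $N$ is their free pro-$p$ product over the (profinitely many) primes $\P$ of $K$ above $S' \sm S$. I would deduce this exactly as in \cite{kpi1}, Corollary 8.3: one checks that $N$ is generated by the inertia groups (no global classes survive, by the $K(\pi,1)$-property of $X\sm S$), and that there are no relations among them, using a count of generators and relations via the Euler-Poincar\'e characteristic, or equivalently by comparing with the free pro-$p$ product and showing the comparison map induces an isomorphism on $H^1$ and an injection on $H^2$. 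Since the inertia groups $T(K_\P(p)|K_\P)$ of a $p$-extension of a local field are themselves free pro-$p$ (as $K_\P$ contains no $p$-th roots of unity in the relevant range, or more simply because tame inertia is procyclic and wild inertia is free), the free product $N$ is free pro-$p$.

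From the identification of $N$ as a free pro-$p$ product of the full local inertia groups, assertions (i) and (ii) follow formally. For (ii): freeness of $N$ combined with the fact that each local inertia group injects as a free factor forces $K_\P(p)|K_\P$ to be totally ramified and to exhaust $k_\p(p)$ locally, giving $k_{S'}(p)_\p = k_\p(p)$ for $\p \in S' \sm S$; here one uses that $K_\P$ already contains the unramified $p$-extension (as $\p$ does not split completely), so that filling in the inertia completes the full local $p$-extension. For (i): since $N$ is free, $\cd_p N \le 1$, and the Hochschild-Serre spectral sequence for $1 \to N \to G_{S'}(p) \to G_S(p) \to 1$ together with $\cd_p G_S(p) \le 2$ yields $\cd_p G_{S'}(p) \le 2$; comparing this spectral sequence with the corresponding Leray sequence for the \'etale cohomology of $X \sm S'$ over $X \sm S$ — whose fibre cohomology is governed by the same local inertia data — shows that the $K(\pi,1)$-property propagates from $X \sm S$ to $X \sm S'$.

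I expect the main obstacle to be the second step, namely establishing that the inertia groups generate $N$ with no relations, i.e., the genuine content of Riemann's existence theorem. Generation is the subtler half: one must rule out global $p$-extensions of $K$ unramified everywhere (outside $S'\sm S$), which is precisely where the hypotheses $G_S(p) \neq 1$ and the $K(\pi,1)$-property of $X\sm S$ enter to control $H^2$ and force the global part to vanish. The non-split hypothesis on $S'\sm S$ is what guarantees the inertia groups are large enough for the free-product map to be surjective. Everything else is a transcription of the cohomological bookkeeping from \cite{kpi1}, Proposition 8.2 and Corollary 8.3, which, as noted, never used disjointness from $S_p$.
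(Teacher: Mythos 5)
Your overall route is the paper's own: Schmidt simply observes that the proofs of Proposition~8.2 and Corollary~8.3 of \cite{kpi1} never used $S\cap S_p=\varnothing$ and then repeats that proof, and your proposal is an outline of exactly that argument (excision plus the $K(\pi,1)$-hypothesis to kill the cohomology of the pro-$p$ cover, freeness of $\Gal(k_{S'}(p)|k_S(p))$, comparison with the free product on mod~$p$ cohomology, then (ii) and (i) as consequences). However, the two places where you supply your own justification rather than deferring to \cite{kpi1} are precisely the places where the mixed case differs from the tame case, and both justifications are wrong as stated. The serious one is the freeness of the inertia groups $T(K_\P(p)|K_\P)$ at primes $\P$ above $\p\mid p$. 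Your parenthetical reasons both fail: $K_\P$ may very well contain $\zeta_p$ (since $K_\P|k_\p$ is pro-$p$, one has $\zeta_p\in K_\P$ if and only if $\zeta_p\in k_\p$, which is the typical case of interest); and ``wild inertia is free'' does not apply, because $T(K_\P(p)|K_\P)$ is not the wild inertia subgroup of $G_{k_\p}$ (which is indeed free pro-$p$) but rather its image in the maximal pro-$p$ quotient, and quotients of free pro-$p$ groups need not be free. The correct argument --- the one encoded in the paper's computation of the local terms in the excision sequence --- is that $K_\P=k_\p^{\nr}(p)$ satisfies $H^i(K_\P,\Z/p\Z)=0$ for $i\geq 2$ (its $p$-primary Brauer group vanishes because its residue field admits no $p$-extensions), so $H^2(T(K_\P(p)|K_\P),\Z/p\Z)$ injects into zero; equivalently, $T(K_\P(p)|K_\P)$ is a closed subgroup of infinite index in the Demushkin (or free) group $\Gal(k_\p(p)|k_\p)$, hence free. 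Without this, the source of your comparison map is not known to be free, and neither the ``homomorphism of free pro-$p$ groups inducing an isomorphism on $H^1$'' argument nor your $H^1$-isomorphism-plus-$H^2$-injection criterion gets off the ground, since $H^2$ of the free product is the restricted direct sum of the groups $H^2(T(K_\P(p)|K_\P))$.

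The second flaw is your assertion that non-splitting of $\p\in S'\sm S$ ``means'' its decomposition group in $G_S(p)$ is infinite. Non-splitting only makes this decomposition group a nontrivial quotient of $\Z_p\cong\Gal(k_\p^{\nr}(p)|k_\p)$, which could a priori be finite cyclic. The paper obtains infiniteness --- and hence the exact identification $K_\P=k_\p^{\nr}(p)$, not merely ``a large piece of $k_\p(p)$'' --- from torsion-freeness of $G_S(p)$, which follows from $\cd G_S(p)\leq 3<\infty$, a direct consequence of the $K(\pi,1)$-property. (Your claim $\cd G_S(p)\leq 2$ is also not immediate from the hypotheses; only $\leq 3$ is, and that suffices.) This exact identification is not a cosmetic refinement but a load-bearing step: it is what makes the punctured local schemes cohomologically trivial in the excision sequence, hence what identifies $H^1(\Gal(k_{S'}(p)|K))$ with the restricted direct sum of the $H^1(K_\P)$, i.e., with $H^1$ of the free product, and it is also what feeds the Brauer-group vanishing above. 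So while your outline matches the paper's architecture and correctly isolates \cite{kpi1}, 8.2--8.3 as the engine, the new content of the mixed case is exactly what your own reasoning gets wrong.
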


\begin{proof} The $K(\pi,1)$-property implies
\[
H^i(G_S(p),\Z/p\Z) \cong H^i_\et(X\sm S,\Z/p\Z)=0 \text{ for } i\geq 4,
\]
hence $\cd\ G_S(p)\leq 3$. Let $\p \in S'\sm S$. Since $\p$ does not split completely in $k_S(p)$ and since $\cd\ G_S(p)< \infty$, the decomposition group of $\p$ in $k_S(p)|k$ is a non-trivial and torsion-free quotient of $\Z_p\cong \Gal(k_\p^{nr}(p)|k_\p)$. Therefore $k_S(p)_\p$ is the maximal unramified $p$-extension of $k_\p$. We denote the normalization of an integral normal scheme $Y$ in an algebraic extension $L$ of its function field by $Y_L$.
Then $(X\sm S)_{k_S(p)}$ is the universal pro-$p$ covering of $X\sm S$.  We consider the \'{e}tale excision sequence for
the pair $((X \sm S)_{k_S(p)}, (X \sm S')_{k_S(p)})$. By assumption, $X\sm S$ is a $K(\pi,1)$ for $p$, hence   $H^i_{\et}((X \sm S)_{k_S},\Z/p\Z)=0$ for $i \geq 1$ by \cite{kpi1}, Proposition~2.1. Omitting the coefficients $\Z/p\Z$ from the notation, this implies isomorphisms
\[
H^i_{\et}\big((X \sm S')_{k_S(p)}\big) \stackrel{\sim}{\to} \ressum_{{\mathfrak p} \in S'\sm S (k_S(p))} H^{i+1}_{\mathfrak p}\big(((X \sm S)_{k_S})_{\mathfrak p}\big)
\]
for $i\geq 1$. Here (and in variants also below) we use the notational convention
\[
\ressum_{{\mathfrak p} \in S'\sm S (k_S(p))} H^{i+1}_{\mathfrak p}\big(((X \sm S)_{k_S(p)})_{\mathfrak p}\big)
:=\varinjlim_{K\subset k_S(p)} \bigoplus_{{\mathfrak p} \in S'\sm S (K)} H^{i+1}_{\mathfrak p}\big(((X \sm S)_{K})_{\mathfrak p}\big),
\]
where $K$ runs through the finite extensions of $k$ inside $k_S(p)$. As $k_S(p)$ realizes the maximal unramified $p$-extension of $k_\p$ for all $\p\in S'\sm S$, the schemes $((X \sm S)_{k_S(p)})_{\mathfrak p}$, $\p\in S'\sm S (k_S(p))$, have trivial cohomology with values in $\Z/p\Z$ and we obtain isomorphisms
\[
H^i((k_S(p))_\p)\stackrel{\sim}{\to} H^{i+1}_{\mathfrak p}\big(((X \sm S)_{k_S(p)})_{\mathfrak p}\big)
\]
for $i\geq 1$.
These groups vanish for $i\geq 2$. This implies
\[
H^i_{\et}((X\sm S')_{k_S(p)})=0
\]
for $i\geq 2$. Since the scheme $(X\sm S')_{k_{S'}(p)}$ is the universal pro-$p$ covering of
$(X\sm S')_{k_{S}(p)}$,  the Hochschild-Serre spectral sequence yields an inclusion
\[
H^2(\Gal(k_{S'}(p)|k_S(p))) \hookrightarrow H^2_{\et}((X\sm S')_{k_{S}(p)})=0.
\]
Hence $\Gal(k_{S'}(p)|k_S(p))$ is a free pro-$p$-group and
\[
H^1(\Gal(k_{S'}(p)|k_S(p))) \stackrel{\sim}{\to} H^1_{\et}((X\sm S')_{k_S(p)})
\cong \ressum_{{\mathfrak p}\in S'\sm S(k_S(p))} H^1(k_S(p)_{\mathfrak p}).
\]
We set $K=k_S(p)$ and consider the natural homomorphism
\[
\phi: \freeproductmed_{{\mathfrak p} \in S'\backslash S(K)} T(K_\p(p)|K_\p) \longrightarrow \Gal(k_{S'}(p)|K).
\]
By the calculation of the cohomology of a free product (\cite{NSW}, 4.3.10 and 4.1.4), $\phi$ is a homomorphism between free pro-$p$-groups which induces an isomorphism on mod~$p$ cohomology. Therefore $\phi$ is an isomorphism. In particular, $k_{S'}(p)_\p=k_\p(p)$ for all $\p\in S'\sm S$.
Using that $\Gal(k_{S'}(p)|k_S(p))$ is free, the Hochschild-Serre spectral sequence
\[
E_2^{ij}=H^i\big(\Gal(k_{S'}(p)|k_S(p)), H^j_\et((X\sm S')_{k_{S'}(p)})\big) \Rightarrow H^{i+j}_\et((X\sm S')_{k_S(p)})
\]
induces an isomorphism
\[
0=H^2_\et((X\sm S')_{k_{S}(p)})\mapr{\sim} H^2_\et((X\sm S')_{k_{S'}(p)})^{\Gal(k_{S'}|k_S)}.
\]
Hence $H^2_\et((X\sm S')_{k_{S'}(p)})=0$, since $\Gal(k_{S'}(p)|k_S(p))$ is a pro-$p$-group.  Now \cite{kpi1}, Proposition~2.1  implies that $X\sm S'$ is a $K(\pi,1)$ for $p$.
\end{proof}

In order to prove Theorem~\ref{thma}, we first provide the following lemma. For an extension field $K|k$ and a set of primes $T$ of $k$, we write $T(K)$ for the set of prolongations of primes in $T$ to $K$ and $\delta_K(T)$ for the Dirichlet density of the set of primes $T(K)$ of $K$.

\begin{lemma}\label{nonsplit}
Let $k$ be a number field, $p$ a prime number and $S$ a finite set of nonarchimedean primes of $k$. Let $T$ be a set of primes of $k$ with $\delta_{k(\mu_p)}(T)=1$.  Then there exists a finite subset $T_0\subset T$  such that all primes $\p\in S$ do not split completely in the extension $k_{T_0}(p)|k$.
\end{lemma}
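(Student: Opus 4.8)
The plan is to reduce the assertion to a statement about a single prime and a single cyclic degree-$p$ extension, and then to locate the needed ramification inside $T$ by a Chebotarev argument over $k(\mu_p)$ — which is exactly where the hypothesis $\delta_{k(\mu_p)}(T)=1$ is used. First I would argue by a limiting process. It suffices to prove that each $\p\in S$ fails to split completely in the maximal $p$-extension $k_T(p)$ of $k$ unramified outside $T$: if $\p$ is non-split in $k_T(p)$, it is already non-split in some finite subextension $L|k$, whose (finitely many) ramified primes form a set $T_0(\p)\subset T$ with $L\subseteq k_{T_0(\p)}(p)$; then $T_0=\bigcup_{\p\in S}T_0(\p)$ does the job, non-splitting being inherited by the larger field. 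In turn, to see that $\p$ is non-split in $k_T(p)$ it is enough to produce one cyclic degree-$p$ extension $L|k$, unramified outside $T$ and unramified at $\p$, with $\Frob_\p\neq 1$ in $\Gal(L|k)$. If $\p$ is already non-split in the $p$-Hilbert class field of $k$ we are done with $T_0(\p)=\varnothing$; so I may assume $\p$ splits there, i.e. its class satisfies $[\p]\in p\,\Cl(k)(p)$.

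Second, the construction. Writing $[\p]=p[\mathfrak{b}]$ with $\mathfrak{b}$ an ideal prime to $\p$, the ideal $\p\mathfrak{b}^{-p}$ is principal, say $\p\mathfrak{b}^{-p}=(\gamma)$ with $\gamma\in k^\times$, $v_\p(\gamma)=1$ and $v_\ell(\gamma)\equiv 0\bmod p$ for $\ell\neq\p$. I would realize $L$ by ramifying tamely at one auxiliary prime $\q\in T$ with $N\q\equiv 1\bmod p$. For such $\q$ the cyclic degree-$p$ extensions of $k$ ramified only at $\q$ are governed by the $p$-part of the ray class group $\Cl_\q(k)$ modulo $\q$, and under the Artin map $\Frob_\p$ equals the image of $[\p]=[(\gamma)]$ in $I_\q:=\im\big((\O/\q)^\times\otimes\Z/p\to \Cl_\q(k)/p\big)$, i.e. the class of $\gamma\bmod\q$ modulo the image of the global units. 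Hence a suitable $L$ with $\Frob_\p\neq 1$ exists as soon as $\q$ satisfies (a) every unit of $k$ is a $p$-th power modulo $\q$ (so that $I_\q\cong\Z/p$), and (b) $\gamma$ is not a $p$-th power modulo $\q$ (so that $\Frob_\p\neq 0$ in $I_\q$).

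Third — the key step — I would meet conditions (a) and (b) by Chebotarev over $K=k(\mu_p)$. The condition $N\q\equiv 1\bmod p$ means $\q$ splits completely in $K|k$; condition (a) means a prime $\Q\mid\q$ of $K$ splits completely in $K(\sqrt[p]{\O_k^\times})$, and condition (b) means $\Q$ does not split completely in $K(\sqrt[p]{\gamma})$. Inside the governing field $M=K\big(\sqrt[p]{\O_k^\times},\sqrt[p]{\gamma}\big)$ these prescribe a single Frobenius class of $\Gal(M|K)$, and this class is nonempty because the Kummer classes are independent: since $v_\p(\gamma)=1$, the class of $\gamma$ in $K^\times/(K^\times)^p$ is not contained in the (valuation-trivial) subgroup generated by the units, so $K(\sqrt[p]{\gamma})\not\subseteq K(\sqrt[p]{\O_k^\times})$. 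The prescribed class therefore has positive density in $K$, and since $\delta_K(T)=1$ it is met by a prime $\Q\in T(K)$; the prime $\q\in T$ below it is the one I want.

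Finally, the points requiring care. The main technical obstacle is the reciprocity-type identification of $\Frob_\p$ in the ray class field modulo $\q$ with the residue of $\gamma$ modulo $\q$, carried out uniformly in $\p$ — including the cases $\p\mid p$ and $N\p\not\equiv 1\bmod p$, where $\p$ is nonetheless unramified in the tamely ramified extension $L$ and the same formula applies. The other essential point is that the hypothesis must be used in the strong form $\delta_{k(\mu_p)}(T)=1$: the Frobenius class cut out by (a) and (b) has only positive density, so merely knowing that $T$ is large would not guarantee a suitable $\q$ inside $T$; it is precisely the density-$1$ assumption over $k(\mu_p)$ that forces $T$ to meet every positive-density Chebotarev class of $\Gal(M|K)$.
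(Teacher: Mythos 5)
Your overall strategy (reduce to a single prime $\p$, construct one cyclic degree-$p$ extension ramified at a single auxiliary prime $\q\in T$, and find $\q$ by a Chebotarev argument over $k(\mu_p)$) is a legitimate, more hands-on alternative to the paper's argument, which instead quotes the surjectivity of the restriction map $H^1(G_{T\cup S\cup S_p\cup S_\R}(p),\Z/p\Z)\to\prod_{\p\in S\cup S_p\cup S_\R}H^1(k_\p,\Z/p\Z)$ (\cite{NSW}, 9.2.2\,(ii)) and obtains the finite set $T_0$ from the finiteness of $\prod_{\p\in S}H^1_{\nr}(k_\p,\Z/p\Z)$. However, your key step is false as stated. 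Conditions (a) and (b) do not imply $I_\q\cong\Z/p$ and $\Frob_\p\neq 0$ in $I_\q$, because the map $(\O_k/\q)^\times\otimes\Z/p\to\Cl_\q(k)/p$ can have kernel strictly larger than the image of the global units. From the exact sequence $0\to E\to\Cl_\q(k)\to\Cl(k)\to 0$ with $E=\coker\big(\O_k^\times\to(\O_k/\q)^\times\big)$ one gets
\[
\Cl(k)[p]\stackrel{\delta}{\lang}E/p\lang\Cl_\q(k)/p\lang\Cl(k)/p\lang 0,
\]
where $\delta$ sends the class of an ideal $\mathfrak{a}$ with $\mathfrak{a}^p=(\alpha)$ to the class of $\alpha\bmod\q$; so the kernel of $E/p\to\Cl_\q(k)/p$ is generated by the residues of such \emph{virtual units} $\alpha$, not only by units. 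Concretely: take $k$ imaginary quadratic, $p\geq 5$, $\Cl(k)=\langle[\mathfrak{a}]\rangle\cong\Z/p$, $\mathfrak{a}^p=(\alpha)$, and $\p=(\gamma)$ a principal prime. All units are torsion of order prime to $p$, so your (a) holds for \emph{every} $\q$; choosing $\q$ (a positive-density condition, compatible with $\q\in T$) so that both $\gamma$ and $\alpha$ are non-$p$-th powers mod $\q$, the map $\delta$ is onto, all of $E/p$ dies in $\Cl_\q(k)/p$, hence $\Frob_\p=[\p]$ is trivial in $\Cl_\q(k)/p$ and every degree-$p$ cyclic extension unramified outside $\q$ splits $\p$ completely, although (a) and (b) hold. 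Since the Lemma assumes nothing about $\Cl(k)$ (the hypothesis $\Cl(k)(p)=0$ sits in Theorem 1.1, not in the Lemma), this case must be covered, and there your proof genuinely breaks.

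The gap is repairable inside your approach: in condition (a) replace $\O_k^\times$ by the group $V=\{x\in k^\times:\ (x)=\mathfrak{a}^p\ \text{for some ideal}\ \mathfrak{a}\}$ of virtual units, which is finite modulo $(k^\times)^p$, i.e.\ demand that the chosen prime of $K=k(\mu_p)$ split completely in $K(\sqrt[p]{V})$ and not in $K(\sqrt[p]{\gamma})$. Your valuation argument still applies: $v_\P(\gamma)=e(\P|\p)\,v_\p(\gamma)$ is prime to $p$, while every element of $V\cdot(K^\times)^p$ has all its valuations divisible by $p$, so $\gamma\notin V\cdot(K^\times)^p$ and the prescribed Frobenius class in $\Gal\big(K(\sqrt[p]{V},\sqrt[p]{\gamma})\,|\,K\big)$ is nonempty. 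The strengthened condition kills $\im\,\delta$, so $E/p\hookrightarrow\Cl_\q(k)/p$ and your identification of $\Frob_\p$ with the residue class of $\gamma$ then really produces the desired $L$. Two smaller points need attention as well: you must take $\q$ of residue degree $1$ over $k$ (otherwise the Kummer conditions over $K$ do not translate into conditions modulo $\q$), which costs nothing since higher-degree primes have density zero; and for $p=2$ one needs narrow ray class groups to account for real places, which the paper's proof handles by carrying $S_\R$ along.
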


\begin{proof}
By \cite{NSW}, 9.2.2\,(ii) or \cite{NSW2}, 9.2.3\,(ii), respectively, the restriction map
\[
H^1(G_{T\cup S\cup S_p\cup S_\R}(p), \Z/p\Z) \longrightarrow \prod_{\p\in S\cup S_p\cup S_\R} H^1(k_\p,\Z/p\Z)
\]
is surjective. A class in $\alpha\in H^1(G_{T\cup S\cup S_p\cup S_\R}(p),\Z/p\Z)$ which restricts to an unramified class $\alpha_\p \in H^1_{\nr}(k_\p,\Z/p\Z)$ for all $\p \in S\cup S_p\cup S_\R$ is contained in $H^1(G_{T}(p),\Z/p\Z)$.  Therefore the image of the composite map
\[
H^1(G_T(p),\Z/p\Z) \hookrightarrow H^1(G_{T\cup S\cup S_p\cup S_\R}(p),\Z/p\Z) \rightarrow \prod_{\p\in S} H^1(k_\p,\Z/p\Z)
\]
contains the subgroup $\prod_{\p\in S} H^1_{nr}(k_\p,\Z/p\Z)$. As this group is finite, it is already contained in the image of $H^1(G_{T_0}(p),\Z/p\Z)$ for some finite subset $T_0\subset T$.  We conclude that no prime in $S$ splits completely in the maximal elementary abelian $p$-extension of $k$ unramified outside~$T_0$.
\end{proof}

\begin{proof}[Proof of Theorems~\ref{thma} and \ref{extra}]
As $p\neq 2$, we may ignore archimedean primes. Furthermore, we may remove the primes in $S\cup S_p$ and all primes of degree greater than~$1$ from $T$. In addition, we remove all primes $\p$ with $N(\p)\not\equiv 1 \bmod p$ from $T$.
After these changes, we still have $\delta_{k(\mu_p)}(T)=1$.

By Lemma~\ref{nonsplit}, we find a finite subset $T_0\subset T$ such that no prime in $S$ splits completely in $k_{T_0}(p)|k$.  Put $X=\Spec(\O_k)$.  By \cite{kpi1}, Theorem 6.2, applied to $T_0$ and $T\sm T_0$, we find a finite subset $T_2\subset T\sm T_0$ such that $X\sm (T_0\cup  T_2)$ is a $K(\pi,1)$ for $p$. Then Proposition~\ref{enlarge} applied to $T_0\cup T_2 \subset S\cup T_0\cup T_2$, shows that also $X\sm (S\cup T_0\cup T_2)$ is a $K(\pi,1)$ for $p$. Now put $T_1= T_0\cup T_2\subset T$.

It remains to show Theorem~\ref{extra}. Assertion (i) holds by construction of $T_1$. By \cite{kpi1}, Lemma 4.1, also $X\sm (S'\cup T_1)$ is a $K(\pi,1)$ for $p$.  By \cite{kpi1}, Theorem~3, the field $k_{T_1}(p)$ realizes $k_\p(p)$ for $\p\in T_1$, showing (ii) for these primes. Finally, assertion (ii) for $\p\in S$ follows from Proposition~\ref{enlarge}.
\end{proof}

\section{Duality}
We start by investigating the relation between the $K(\pi,1)$-property and the universal norms of global units.

Let us first remove redundant primes from $S$: If $\p \nmid p$ is a prime with $\zeta_p\notin k_\p$, then every $p$-extension of the local field~$k_\p$ is unra\-mi\-fied (see \cite{NSW}, 7.5.1 or \cite{NSW2}, 7.5.9, respectively).  Therefore primes $\p \notin  S_p$ with $N(\p) \not \equiv 1 \bmod p$  cannot ramify in a $p$-extension.
Removing all these redundant primes from $S$, we obtain a subset $S_{\min} \subset S$, which has the property that $G_S(p)=G_{S_{\min}}(p)$. Furthermore, by \cite{kpi1}, Lemma~4.1, $X\sm S$ is a $K(\pi,1)$ for $p$ if and only if $X\sm S_\min$ is a $K(\pi,1)$ for $p$.

\begin{theorem} \label{thmb} Let $k$ be a number field and let $p$ be a prime number. Assume that $k$ is totally imaginary if $p=2$.  Let $S$ be a finite  set of nonarchimedean primes of\/ $k$. Then any two of the following conditions {\rm (a) -- (c)} imply the third.

\smallskip\noindent
\begin{compactitem}
\item[\rm (a)] $\Spec({\cal O}_k)\sm S$ is a $K(\pi,1)$ for $p$. \smallskip
\item[\rm (b)] $\varprojlim_{K\subset k_S(p)} \O_K^\times \otimes \Z_p=0$. \smallskip
\item[\rm (c)] $(k_{S}(p))_\p=k_\p(p)$ for all primes $\p\in S_\min$.
\end{compactitem}

\smallskip\noindent
The limit in {\rm (b)} runs through all finite extensions $K$ of $k$ inside $k_S(p)$.
If {\rm (a)--(c)} hold, then also
\[
\varprojlim_{K\subset k_S(p)} \O_{K,S_\min}^\times \otimes \Z_p=0.
\]
\end{theorem}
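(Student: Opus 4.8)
Put $X=\Spec(\O_k)$ and $\Omega=k_S(p)$. The plan is to translate each of (a), (b), (c) into the (non)vanishing of a single term in one exact sequence coming from Artin--Verdier (equivalently Poitou--Tate) duality for the open scheme $X\sm S$, passed to the inverse limit over the tower $\Omega$. First I would make the reductions already used in the proof of Theorem~\ref{thma}: as $p$ is odd or $k$ is totally imaginary we may ignore archimedean primes, and since $X\sm S$ is a $K(\pi,1)$ for $p$ if and only if $X\sm S_\min$ is (\cite{kpi1}, Lemma~4.1) while $G_S(p)=G_{S_\min}(p)$, I may assume $S=S_\min$ throughout; this leaves (a), (b), (c) and the final assertion unchanged.

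Next I would record the cohomological content of (b) and of the concluding vanishing. For finite $K\subset\Omega$ the Kummer sequence gives $0\to\O_{K,S}^\times/p^n\to H^1_\et(\Spec\O_{K,S},\mu_{p^n})\to\Cl(\O_{K,S})[p^n]\to0$; passing to $\varprojlim_n$ and using finiteness of $\Cl(\O_{K,S})$ identifies $H^1_\et(\Spec\O_{K,S},\Z_p(1))=\O_{K,S}^\times\otimes\Z_p$, and similarly for $S=\varnothing$. Writing $H^i_{Iw}$ for $\varprojlim_K H^i_\et$ along the norm maps, condition (b) reads $H^1_{Iw}(\O_k,\Z_p(1))=0$ and the final statement reads $H^1_{Iw}(\O_{k,S},\Z_p(1))=0$. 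The bridge between the two is the $p$-adically completed unit/$S$-unit sequence
\[ 0\to\O_K^\times\otimes\Z_p\to\O_{K,S}^\times\otimes\Z_p\xrightarrow{v}\bigoplus_{\p\in S(K)}\Z_p\to\Cl(\O_K)\otimes\Z_p\to\Cl(\O_{K,S})\otimes\Z_p\to0, \]
which is exact by flatness of $\Z_p$. Here (c) enters decisively: if $(k_S(p))_\p=k_\p(p)$ for all $\p\in S$, then $\Omega_\p$ contains the unramified $\Z_p$-extension of $k_\p$, so the residue degrees $f(\P/\p)$ are divisible by unbounded powers of $p$ along the tower; as the norm on $\bigoplus_\p\Z_p$ is multiplication by $f$, one gets $\varprojlim_K\bigoplus_{\p\in S(K)}\Z_p=0$. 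Feeding this into the displayed sequence yields $\varprojlim_K(\O_K^\times\otimes\Z_p)\xrightarrow{\sim}\varprojlim_K(\O_{K,S}^\times\otimes\Z_p)$, so \emph{under} (c) condition (b) is equivalent to the concluding vanishing. This already establishes the last assertion of the theorem (it uses only (b) and (c)) and reduces the three-term game to linking (a) with $H^1_{Iw}(\O_{k,S},\Z_p(1))$.

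For that link I would invoke \cite{kpi1}, Proposition~2.1, by which (a) is equivalent to the vanishing of $H^i_\et((X\sm S)_\Omega,\Q_p/\Z_p)$ for $i\ge1$, and then dualize via Artin--Verdier duality for $X\sm S$ with coefficients $\Z_p(1)$, taken in the Iwasawa limit over $\Omega$. The localization-at-$S$ triangle
\[ \cdots\to H^i_{Iw}(\O_{k,S},\Z_p(1))\xrightarrow{\mathrm{loc}}\bigoplus_{\p\in S}H^i_{Iw}(\Omega_\p,\Z_p(1))\to H^{3-i}_\et((X\sm S)_\Omega,\Q_p/\Z_p)^\vee\to H^{i+1}_{Iw}(\O_{k,S},\Z_p(1))\to\cdots \]
ties together the global universal-norm module $H^1_{Iw}(\O_{k,S},\Z_p(1))=\varprojlim_K\O_{K,S}^\times\otimes\Z_p$, the local universal norms $H^1_{Iw}(\Omega_\p,\Z_p(1))=\varprojlim_K K_\P^\times\otimes\Z_p$, and the $K(\pi,1)$-defect (the right-hand term, which vanishes exactly under (a)). I would then compute the local terms under (c), where $\Omega_\p=k_\p(p)$, and read off "any two imply the third" by chasing the triangle: (a) forces $\mathrm{loc}$ to be injective with controlled cokernel, and combined with the (c)-computation of the target this pins down the global term, while the (c)-bridge of the previous paragraph converts the global vanishing back and forth between (b) and the conclusion.

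The hard part will be the mixed-case local analysis and the duality bookkeeping at the primes above $p$. Two spots need genuine work: the boundary terms $H^\ast_{Iw}(\Omega_\p,\Z_p(1))$ for $\p\in S$ with $\p\mid p$, where $k_\p(p)$ has a Demushkin-type Galois group and the universal local norms must be computed honestly; and the interior primes $\p\mid p$ with $\p\notin S$, where $\mu_p$ is wildly ramified on $\Spec\O_{k,S}$, so the naive identification of scheme cohomology with $S$-ramified Galois cohomology fails and these points must be carried through the Artin--Verdier duality. Alongside this I would control the $\varprojlim^1$-terms and the precise twist ($\mu_p$ versus $\Z/p$, which matters exactly when $\zeta_p\notin k$), so that the mod-$p$ statement (a) is matched against the $p$-adic statements (b) and the conclusion without loss. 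I expect these local computations, rather than the formal homological algebra, to be the crux, as one would anticipate given that the novelty lies precisely in the mixed case $\varnothing\subsetneq S\cap S_p\subsetneq S_p$.
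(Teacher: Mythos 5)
Your reduction to $S=S_\min$ and your treatment of the \emph{final} assertion are sound and essentially the paper's own argument: tensor the unit/$S$-unit/ideal-class sequence with $\Z_p$, note that (c) makes the residue degrees at $\p\in S$ $p$-adically unbounded along the tower so that $\varprojlim_K\bigoplus_{\p\in S(K)}\Z_p=0$, and use exactness of $\varprojlim$ on compact $\Z_p$-modules. (The paper additionally invokes $\varprojlim_K\Pic(X_K)\otimes\Z_p=0$, which holds unconditionally by class field theory since $k_S(p)$ has no unramified $p$-extension, but the content is the same.)

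The core of the theorem, however --- that any two of (a), (b), (c) imply the third --- is not proven in your proposal. Everything is reduced to the Iwasawa-theoretic localization triangle
\[
H^i_{Iw}(\O_{k,S},\Z_p(1))\longrightarrow\bigoplus_{\p\in S}H^i_{Iw}(\Omega_\p,\Z_p(1))\longrightarrow H^{3-i}_\et((X\sm S)_\Omega,\Q_p/\Z_p)^\vee\longrightarrow\cdots,
\]
which you assert but do not establish, and whose validity in the mixed case is precisely the delicate point: at primes $\p\mid p$ with $\p\notin S$, \'{e}tale cohomology of $X\sm S$ with $p$-power-torsion coefficients does not agree with $S$-ramified Galois cohomology, Artin--Verdier duality must be replaced by flat (Artin--Mazur) duality there, and the $\varprojlim^1$ and twist issues you list are real. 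You explicitly defer all of this (``needs genuine work,'' ``the crux''), and the concluding chase --- ``(a) forces $\mathrm{loc}$ to be injective with controlled cokernel \dots\ this pins down the global term'' --- gives no actual argument for any of the three implications. So what you have is a plausible plan whose hardest step is exactly the one left open, not a proof.

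It is instructive to see how the paper avoids the difficulty entirely: it never dualizes on the open scheme $X\sm S$, nor with $\Z_p(1)$-coefficients. It works mod $p$ on the \emph{complete} scheme $X_K=\Spec(\O_K)$, where the flat Kummer sequence, the vanishing $\varprojlim_K {}_p\Pic(X_K)=0$ (class field theory), and Artin--Mazur duality $H^2_\et(X_K,\Z/p\Z)\cong H^1_\fl(X_K,\mu_p)^\vee$ combine into the single identity $\varinjlim_K H^2_\et(X_K,\Z/p\Z)\cong\bigl(\varprojlim_K\O_K^\times/p\bigr)^\vee$; condition (b) is converted to its mod-$p$ form by topological Nakayama; condition (c) is converted into the vanishing of $\varinjlim_K\bigoplus_{\p\in S(K)}H^i_\p(X_K,\Z/p\Z)$ using the elementary fact that for $\p\in S_\min$ every proper Galois subextension of $k_\p(p)|k_\p$ still admits ramified $p$-extensions; and the three conditions are then tied together by the ordinary \'{e}tale excision sequence for the pair $(X_K,(X\sm S)_K)$. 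In this setup the wild local analysis you identified as the crux simply never arises.
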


\noindent
{\bf Remarks:} 1. Assume that $\zeta_p \in k$ and $S\supset S_p$. Then (a) holds and condition (b) holds for $p>2$ if $\# S > r_2+2$ (see \cite{NSW2}, Remark 2 after 10.9.3). In the case  $k=\Q(\zeta_p)$, $S=S_p$, condition (b) holds if and only if $p$ is an irregular prime number.

\noindent
2. Assume that $S\cap S_p=\varnothing$  and $S_\min\neq \varnothing$. If condition (a) holds, then either $G_S(p)=1$ (which only happens in very special situations, see \cite{kpi1}, Proposition 7.4) or (b) holds by \cite{kpi1}, Theorem~3 (or by Proposition~\ref{fulllocal} below).

\begin{proof}[Proof of Theorem~\ref{thmb}] We may assume $S=S_\min$ in the proof.
Let $K$ run through the finite extensions of $k$ in $k_S(p)$ and put $X_K=\Spec(\O_K)$. Applying the topological Nakayama-Lemma (\cite{NSW}, 5.2.18) to the compact $\Z_p$-module $\varprojlim \O_K^\times \otimes \Z_p$, we see that condition (b) is equivalent to

\medskip
\begin{compactitem}
\item[\rm (b)']\quad  $\varprojlim_{K\subset k_S(p)} \O_K^\times/p=0$.
\end{compactitem}

\medskip\noindent
Furthermore, by \cite{kpi1}, Proposition~2.1, condition  (a) is equivalent to

\medskip
\begin{compactitem}
\item[\rm (a)'] \quad $\varinjlim_{K\subset k_S(p)} H^i_\et((X\sm S)_K, \Z/p\Z)=0$ for $i\geq 1$.
\end{compactitem}

\medskip\noindent
Condition (a)' always holds for $i=1$, $i\geq 4$, and it holds for $i=3$ provided that $G_S(p)$ is infinite or  $S$ is nonempty or $\zeta_p \notin k$ (see \cite{kpi1}, Lemma 3.7).
The flat Kummer sequence $0\to \mu_p \to \G_m \stackrel{\cdot p}{\to} \G_m \to 0$ induces exact sequences
\[
0 \lang \O_K^\times /p \lang H^1_\fl(X_K,\mu_p) \lang \null_p \Pic(X) \to 0
\]
for all $K$. As the field $k_S(p)$ does not have nontrivial unramified $p$-extensions, class field theory implies
\[
\varprojlim_{K\subset k_S(p)} \null_p\Pic(X_K) \subset \varprojlim_{K\subset k_S(p)} \Pic(X_K)\otimes \Z_p=0.
\]
As we assumed  $k$ to be totally imaginary if $p=2$, the flat duality theorem of Artin-Mazur (\cite{Mi}, III Corollary 3.2) induces natural isomorphisms
\[
 H^2_\et(X_K,\Z/p\Z)= H^2_\fl(X_K,\Z/p\Z) \cong H^1_\fl(X_K, \mu_p)^\vee.
\]
  We conclude that
\[
\varinjlim_{K\subset k_S(p)} H^2_\et(X_K,\Z/p\Z)\cong \big(\varprojlim_{K\subset k_S(p)} \O_K^\times /p \ \big)^\vee. \leqno (*)
\]
We first show the equivalence of (a) and (b) in the case $S=\varnothing$. If (a)' holds, then $(\ast)$ shows (b)'. If (b) holds, then $\zeta_\p\notin k$ or $G_S(p)$ is infinite. Hence we obtain (a)' for $i=3$. Furthemore, (b)' implies (a)' for $i=2$ by $(\ast)$. This finishes the proof of the case $S=\varnothing$.

Now we assume that $S\neq \varnothing$. For ${\mathfrak p} \in S(K)$, a standard calculation of local cohomology shows that
\[
H^i_{\mathfrak p}(X_K, \Z/p\Z)\cong \left\{
\begin{array}{cc}
0& \text{for } i\leq 1,\\
H^1(K_\p,\Z/p\Z)/H^1_\nr(K_\p,\Z/p\Z)& \text{for } i=2,\\
H^2(K_\p,\Z/p\Z)& \text{for } i=3.\\
0 &\text{for } i\geq 4.
\end{array}
 \right.
\]
For $\p\in S=S_\min$,  every proper Galois subextension of  $k_\p(p)|k_\p$ admits ramified $p$-extensions. Hence condition (c) is equivalent to

\medskip
\begin{compactitem}
\item[\rm (c)'] \quad $\varinjlim_{K\subset k_S(p)} \bigoplus_{\p \in S(K)} H^i_\p(X_K, \Z/p\Z)=0$ for all $i$,
\end{compactitem}

\medskip\noindent
and to
\medskip
\begin{compactitem}
\item[\rm (c)''] \quad $\varinjlim_{K\subset k_S(p)} \bigoplus_{\p \in S(K)} H^2_\p(X_K, \Z/p\Z)=0$.
\end{compactitem}

\medskip\noindent
Consider the direct limit over all $K$ of  the excision sequences
\[
\cdots \to  \bigoplus_{\p\in S(K)} H^i_\p(X_K,\Z/p\Z) \to H^i_\et(X_K,\Z/p\Z) \to H^i_\et((X\sm S)_K,\Z/p\Z) \to \cdots.
\]
Assume that (a)' holds, i.e.\ the right hand terms vanish in the limit for $i\geq 1$.
Then $(\ast)$ shows that (b)' is equivalent to (c)''.

Now assume that (b) and (c) hold. As above, (b) implies the vanishing of the middle term for $i=2,3$ in the limit.
Condition (c)' then shows (a)'.

We have proven that any two of the conditions (a)--(c) imply the third.

\medskip
Finally, assume that (a)--(c) hold. Tensoring the exact sequences (cf.\ \cite{NSW}, 10.3.11 or \cite{NSW2}, 10.3.12, respectively)
\[
0 \to \O_K^\times \to \O_{K,S}^\times \to \bigoplus_{\p\in S(K)} (K_\p^\times/U_\p) \to \Pic(X_K) \to \Pic((X\sm S)_K) \to 0
\]
by (the flat $\Z$-algebra) $\Z_p$, we obtain exact sequences of finitely generated, hence compact, $\Z_p$-modules. Passing to the projective limit over the finite extensions $K$ of $k$ inside $k_S(p)$ and using $\varprojlim \Pic(X_K)\otimes \Z_p =0$, we obtain the exact sequence
\[
0 \to \varprojlim_{K\subset k_S(p)} \O_K^\times \otimes \Z_p \to \varprojlim_{K\subset k_S(p)} \O_{K,S}^\times \otimes \Z_p \to \varprojlim_{K\subset k_S(p)} \bigoplus_{\p\in S(K)} (K_\p^\times /U_\p) \otimes \Z_p \to 0.
\]
Condition (c) and local class field theory imply the vanishing of the right hand limit.
Therefore (b) implies the vanishing of the projective limit in the middle.
\end{proof}

If $G_S(p)\neq 1$ and condition (a) of Theorem~\ref{thma} holds, then the failure in condition (c) can only come from primes dividing $p$. This follows from the next

\begin{proposition} \label{fulllocal}
Let $k$ be a number field and let $p$ be a prime number. Assume that $k$ is totally imaginary if $p=2$.  Let $S$ be a finite  set of nonarchimedean primes of\/ $k$.  If $\Spec(\O_k)\sm S$ is a $K(\pi,1)$ for $p$ and $G_S(p)\neq 1$, then every prime $\p\in S$ with $\zeta_p\in k_\p$ has an infinite inertia group in $G_S(p)$. Moreover,  we have
\[
k_S(p)_\p=k_\p(p)
\]
for all  $\p\in S_\min \sm S_p$.
\end{proposition}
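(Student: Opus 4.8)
The plan is to reduce the whole statement to the single assertion that the inertia group is nontrivial. First I would replace $S$ by $S_{\min}$, which changes neither $G_S(p)$ nor the fields $k_S(p)_\p$ for $\p\in S_{\min}$. Since $\Spec(\O_k)\sm S$ is a $K(\pi,1)$ for $p$, the isomorphisms $H^i(G_S(p),\Z/p\Z)\cong H^i_\et(\Spec(\O_k)\sm S,\Z/p\Z)$ together with the vanishing of the right-hand side for $i\geq 4$ give $\cd G_S(p)<\infty$; hence $G_S(p)$, and every closed subgroup of it, is a torsion-free pro-$p$ group, and $G_S(p)$ is infinite because it is nontrivial. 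For $\p\in S$ the decomposition group $G_\p\subseteq G_S(p)$ is thus torsion-free, so its inertia subgroup $T_\p$ is either trivial or infinite. The content of Part~1 is therefore to exclude $T_\p=1$, and this is where the difficulty lies.

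Assume $T_\p=1$, i.e. $\p$ is unramified in $k_S(p)$. Then $G_\p$ is a quotient of $\Gal(k_\p^{\mathrm{nr}}(p)|k_\p)\cong\Z_p$, so either $G_\p=1$ (and $\p$ splits completely) or $G_\p\cong\Z_p$ (and $\p$ is inert). I would first rule out complete splitting by a cohomological computation on the universal pro-$p$ cover. The $K(\pi,1)$-property gives $\varinjlim_K H^i_\et((\Spec\O_K\sm S),\Z/p\Z)=0$ for $i\geq 1$, and feeding this into the excision sequences yields $\varinjlim_K\bigoplus_{\q\in S(K)}H^3_\q(\Spec\O_K,\Z/p\Z)\cong\varinjlim_K H^3_\et(\Spec\O_K,\Z/p\Z)=0$, the last group vanishing because it is dual to $\varprojlim_K\mu_p(K)=0$. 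If $\p$ split completely, the completions $K_\p$ would be constant equal to $k_\p$ along the tower, so the $\p$-part of the left-hand group would be a direct limit, along \emph{injective} diagonal transition maps, of copies of $H^3_\q(\,\cdot\,)\cong H^2(K_\q,\Z/p\Z)\cong\Z/p\Z$ (local computation in the proof of Theorem~\ref{thmb}); this limit is nonzero, a contradiction.

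It remains to exclude the inert case $G_\p\cong\Z_p$, and this is the genuine obstacle. Here the purely formal input is \emph{not} enough: the $\p$-contribution to $\varinjlim_K\bigoplus_{\q\in S(K)}H^2_\q$ is $\cong\Z/p\Z$ (for $\p\nmid p$ one computes it by absolute purity as $\varinjlim H^0(\kappa(\p'),\Z/p\Z(-1))$, with \emph{isomorphisms} as transition maps), so it is perfectly consistent with $T_\p=1$. The exclusion must be arithmetic, and the tool I would use is the duality of Theorem~\ref{thmb}, whose proof identifies $\varinjlim_K\bigoplus_{\q\in S(K)}H^2_\q$ with $(\varprojlim_K\O_K^\times/p)^\vee$ and shows, prime by prime, that the $\q$-summand vanishes precisely when $k_S(p)_\q=k_\q(p)$. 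I would show that under hypothesis (a) all nonvanishing is carried by the primes dividing $p$: equivalently, that a universal norm unit has trivial image in $\varprojlim_K\O_{K_\q}^\times/p$ for every $\q\in S_{\min}\sm S_p$. The hard point is exactly that at an inert $\q$ the local limit $\varprojlim_K\O_{K_\q}^\times/p\cong\Z/p\Z$ is itself nonzero — its classes are $p$-power roots of unity gained in the growing residue field — so the local norm alone does not kill the pairing. I expect to close this by separating the tame part and invoking the tame theory of \cite{kpi1} (the mechanism behind Remark~2 after Theorem~\ref{thmb}: in the tame situation, (a) and $G_S(p)\neq 1$ force $\varprojlim_K\O_K^\times/p=0$), supplemented for $\q\mid p$ by local Tate duality, which also yields the infinite-inertia assertion of Part~1 at the wild primes with $\zeta_p\in k_\q$.

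Finally, granting that $T_\p$ is infinite I would upgrade this to Part~2 at a tame prime $\p\in S_{\min}\sm S_p$ by a local group-theoretic argument. For such $\p$ one has $\zeta_p\in k_\p$ and $\mathcal G_\p:=\Gal(k_\p(p)|k_\p)\cong\langle\sigma,\tau\mid\sigma\tau\sigma^{-1}=\tau^{N(\p)}\rangle$ (pro-$p$), with procyclic inertia $T=\overline{\langle\tau\rangle}\cong\Z_p$. Writing $G_\p=\mathcal G_\p/N$, infinite inertia means $TN/N$ is infinite, i.e. $T\cap N=1$; then $N$ embeds into $\mathcal G_\p/T\cong\Z_p$. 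A direct manipulation of the defining relation — using $N(\p)\equiv 1\bmod p$ and $N(\p)\neq 1$, so that $\tau^{\,1-N(\p)^{p^j}}\neq 1$ — shows that $\mathcal G_\p$ has no nontrivial closed normal subgroup meeting $T$ trivially. Hence $N=1$ and $k_S(p)_\p=k_\p(p)$, which is assertion~(ii). This last step is clean; the main obstacle remains the exclusion of inert primes in the previous paragraph.
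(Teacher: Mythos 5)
Your reduction steps are sound: passing to $S_{\min}$, the torsion-freeness argument (finite cohomological dimension from the $K(\pi,1)$-property) giving the ``trivial or infinite'' dichotomy for inertia, the exclusion of the completely split case via the limit excision sequence and the vanishing of $\varinjlim_K H^3_\et(X_K,\Z/p\Z)$, and the closing presentation-theoretic argument that a quotient of $\Gal(k_\p(p)|k_\p)=\langle\sigma,\tau\mid\sigma\tau\sigma^{-1}=\tau^{N(\p)}\rangle$ with infinite inertia image must be the full group (this is exactly what the paper leaves implicit in its last sentence). But there is a genuine gap precisely where you locate ``the genuine obstacle'': the inert case is never actually excluded, and no argument at all is given for nontriviality of inertia at wild primes $\p\mid p$ with $\zeta_p\in k_\p$ (``supplemented by local Tate duality'' is not a proof). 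Worse, the tool you propose for the inert case cannot work: the ``mechanism behind Remark~2 after Theorem~\ref{thmb}'' is justified in the paper either by \cite{kpi1}, Theorem~3 --- whose hypotheses require $S\cap S_p=\varnothing$ and that the \emph{tame} complement $X\sm S$ itself be a $K(\pi,1)$ --- or by Proposition~\ref{fulllocal} itself, so invoking it here is circular. In the mixed case the hypothesis (a) concerns $X\sm S$ with $S\cap S_p\neq\varnothing$; it gives no $K(\pi,1)$ statement about $X\sm(S\sm S_p)$, so there is no ``tame part'' to which the results of \cite{kpi1} could be applied.

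For comparison, the paper's proof needs neither a case distinction nor the duality formalism of Theorem~\ref{thmb}; it works at finite level. If $\p$ (with $\zeta_p\in k_\p$) were unramified in $k_S(p)$, then $k_{S'}(p)=k_S(p)$ for $S'=S\sm\{\p\}$, and the excision sequence for the pair $(X\sm S',X\sm S)$, combined with $H^3_\et(X\sm S,\Z/p\Z)=0$ and the injectivity of $H^2(G_{S'}(p),\Z/p\Z)\hookrightarrow H^2_\et(X\sm S',\Z/p\Z)$, shows that $H^2_\et(X\sm S')\to H^2_\et(X\sm S)$ is surjective, whence $\Z/p\Z\cong H^3_\p(X)\stackrel{\sim}{\to}H^3_\et(X\sm S')$. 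This forces $S'=\varnothing$ and $\zeta_p\in k$; running the same argument over every finite subextension of $k_S(p)$ shows $\p$ is inert in $k_S(p)=k_\varnothing(p)$, so $G_\varnothing(p)$ is a quotient of $\Gal(k_\p^\nr(p)|k_\p)\cong\Z_p$, hence abelian, hence finite by class field theory, hence trivial by finite cohomological dimension --- contradicting $G_S(p)\neq 1$. It is this finite-level excision argument, not an appeal to the tame theory, that your outline would need in order to close both the inert case and the wild primes at once.
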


\begin{proof} We may assume $S=S_\min$.
Suppose $\p\in S$ with $\zeta_p\in k_\p$ does not ramify in $k_S(p)|k$. Setting $S'=S\sm \{\p\}$, we have $k_{S'}(p)=k_S(p)$, in particular,
\[
H^1_\et (X\sm S',\Z/p\Z) \stackrel{\sim}{\lang} H^1_\et (X\sm S,\Z/p\Z).
\]
In the following, we omit the coefficients $\Z/p\Z$ from the notation. Using the vanishing of $H^3_\et(X\sm S)$, the \'{e}tale excision sequence yields a commutative exact diagram
\[
\xymatrix@=.7cm{&H^2(G_{S'}(p))\ar[r]^\sim\ar@{^{(}->}[d]&H^2(G_S(p))\ar[d]^\wr\\
H^2_\p(X)\ar@{^{(}->}[r]&H^2_\et(X\sm S')\ar[r]^\alpha&H^2_\et(X\sm S)\ar[r]&H^3_\p(X)\ar@{->>}[r]&H^3_\et(X\sm S').}
\]
Hence $\alpha$ is split-surjective and $\Z/p\Z\cong H^3_\p(X)\stackrel{\sim}{\to} H^3_\et(X\sm S')$. This implies $S'=\varnothing$, hence $S=\{\p\}$, and $\zeta_p\in k$. The same applies to every finite extension of $k$ in $k_S(p)$, hence $\p$ is inert in $k_S(p)=k_\varnothing(p)$. This implies that the natural homomorphism
\[
\Gal(k_\p^\nr(p)|k_\p) \lang G_\varnothing(k)(p)
\]
is surjective. Therefore $G_S(p)=G_\varnothing(p)$ is abelian, hence finite by class field theory. Since this group has finite cohomological dimension by the $K(\pi,1)$-property, it is trivial, in contradiction to our assumptions.

This shows that all $\p\in S$ with $\zeta_p\in k_\p$ ramify in $k_S(p)$. As this applies to every finite extension of $k$ inside $k_S(p)$, the inertia groups must be infinite. For $\p\in S_\min\sm S_p$ this implies  $k_S(p)_\p=k_\p(p)$.
\end{proof}

\begin{theorem} \label{dualmod} Let $k$ be a number field and let $p$ be a prime number. Assume that $k$ is totally imaginary if $p=2$.  Let $S$ be a finite nonempty set of nonarchimedean primes of\/ $k$.
Assume that conditions {\rm (a)--(c)} of Theorem~\ref{thmb} hold and that $\zeta_p\in k_\p$ for all $\p\in S$. Then $G_S(p)$ is a pro-$p$ duality group of dimension~$2$.
\end{theorem}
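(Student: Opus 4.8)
Write $G=G_S(p)$ and $X=\Spec(\O_k)$. Since $\zeta_p\in k_\p$ for every $\p\in S$, no prime of $S$ is redundant and $S=S_\min$. The plan is to verify the three conditions of the cohomological criterion for duality groups (\cite{NSW}, 3.4.6): that $\cd G=2$, that all groups $H^i(G,\Z/p\Z)$ are finite, and that the dualizing modules $D_i(G)$ vanish for $i\neq 2$. The last condition I would phrase, via the completed group algebra $\F_p[[G]]$, as the vanishing $H^i(G,\F_p[[G]])=0$ for $i=0,1$.

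First I would record that $G\neq 1$: otherwise $k_S(p)_\p=k_\p\neq k_\p(p)$ for $\p\in S$, contradicting~(c). Next, condition~(c) identifies the decomposition group of each $\p\in S$ in $G$ with the full local group $\Gal(k_\p(p)|k_\p)$, which is a Demushkin group because $\zeta_p\in k_\p$; as a closed subgroup of cohomological dimension~$2$ it forces $\cd G\geq 2$. For the opposite inequality I would use the $K(\pi,1)$-property~(a), which gives $H^i(G,\Z/p\Z)\cong H^i_\et(X\sm S,\Z/p\Z)$, and then the \'{e}tale excision sequence for $(X,X\sm S)$ together with the local computation in the proof of Theorem~\ref{thmb} and the flat duality theorem of Artin--Mazur to see that these groups vanish for $i\geq 3$; here the nonemptiness of $S$ is essential. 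Thus $\cd G=2$, and since $G$ is topologically finitely generated by class field theory and the $H^i_\et(X\sm S,\Z/p\Z)$ are finite, all $H^i(G,\Z/p\Z)$ are finite.

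The heart of the matter is the vanishing of $H^i(G,\F_p[[G]])$ for $i=0,1$. The case $i=0$ is immediate because $G$ is infinite. For $i=1$, Shapiro's lemma and passage to the limit give
\[
H^1(G,\F_p[[G]])\cong\varprojlim_{K\subset k_S(p)}H^1(\Gal(k_S(p)|K),\Z/p\Z),
\]
the limit over the finite subextensions $K|k$ with transition maps the corestrictions. By the $K(\pi,1)$-property over each $K$ and class field theory this limit is computed from the exact sequence
\[
0\to\O_K^\times\to\O_{K,S}^\times\to\bigoplus_{\p\in S(K)}K_\p^\times/U_\p\to\Pic(X_K)\to\Pic((X\sm S)_K)\to 0
\]
used in the proof of Theorem~\ref{thmb}: tensoring with $\Z_p$ and passing to the projective limit, one has $\varprojlim\Pic(X_K)\otimes\Z_p=0$ and, by the final assertion of Theorem~\ref{thmb}, $\varprojlim\O_{K,S}^\times\otimes\Z_p=0$. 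The global contribution to the limit therefore disappears, while condition~(c) makes the local factors at the primes of $S$ `full', so that they contribute only in degree~$2$, producing the two-dimensional dualizing module. This yields $H^1(G,\F_p[[G]])=0$ and completes the verification.

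I expect the main obstacle to be precisely this last step: making rigorous the translation of the Galois-cohomological object $H^1(G,\F_p[[G]])$ into the arithmetic universal-norm data, and in particular checking that the local terms at the primes of $S$ --- governed by the Demushkin decomposition groups and hence by condition~(c) and the hypothesis $\zeta_p\in k_\p$ --- drop out of the degree-one limit and survive only in degree two. The remaining ingredients are either quoted (the criterion \cite{NSW}, 3.4.6, and local Demushkin duality) or already contained in Theorem~\ref{thmb}.
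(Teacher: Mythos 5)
The easy parts of your proposal coincide with the paper's proof: $\cd G_S(p)\le 2$ from (a) and $H^3_\et(X\sm S,\Z/p\Z)=0$ (nonemptiness of $S$ being the point), $\cd G_S(p)=2$ and $G_S(p)\neq 1$ from (c) together with $\zeta_p\in k_\p$ via the full local groups, $D_0=0$ since $G_S(p)$ is infinite, and your reformulation of $D_1=0$ as the vanishing of $\varprojlim_K H^1(\Gal(k_S(p)|K),\Z/p\Z)$ along corestrictions is also correct. The gap sits in the one step that carries all the weight, and it is genuine. That limit is taken along corestrictions; dually, $D_1=\varinjlim_K \Gal(k_S(p)|K)^{\ab}/p$ along the transfer maps, which under Artin reciprocity are the \emph{conorm} (capitulation) maps going up the tower $k\subset K\subset k_S(p)$. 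The two facts you propose to feed in --- $\varprojlim_K\Pic(X_K)\otimes\Z_p=0$ and the final assertion $\varprojlim_K\O_{K,S}^\times\otimes\Z_p=0$ of Theorem~\ref{thmb} --- are universal-norm statements: projective limits along the \emph{norm} maps going down the tower. A norm-limit statement does not compute a transfer-limit; capitulation and vanishing of universal norms are independent phenomena (class groups always capitulate in the $p$-Hilbert class field, irrespective of any norm condition), and indeed (b) is a hypothesis here, not a tool that yields $D_1=0$. Moreover, the exact sequence you quote, with the value groups $K_\p^\times/U_\p$ and the two Picard groups, governs $S$-units and $S$-class groups, whereas the module you need, $\Gal(k_S(p)|K)^{\ab}/p=H^1(\Gal(k_S(p)|K),\Z/p\Z)^\vee$, is governed by the different class-field-theoretic sequence in which the local unit groups $U_\p$ (inertia) and the full class group $\Pic(X_K)$ appear. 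So the translation you yourself flag as the main obstacle is exactly the missing proof, and the route you indicate for it would not produce it.

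For comparison, the paper stays on the side where all transition maps are pullbacks/conorms, for which the needed vanishing theorems exist. By Artin--Mazur flat duality, $H^1((X\sm S)_K,\Z/p\Z)^\vee\cong H^2_{\fl,c}((X\sm S)_K,\mu_p)=H^2_\fl(X_K,j_!\mu_p)$; excision gives an exact sequence
\[
\bigoplus_{\p\in S(K)}K_\p^\times/K_\p^{\times p}\to H^2_\fl(X_K,j_!\mu_p)\to H^2_\fl((X\sm S)_K,\mu_p),
\]
whose left term dies in the limit by local Kummer theory (this is where $\zeta_p\in k_\p$ and (c) enter), while the Kummer sequence traps the right term between $\Pic((X\sm S)_K)/p$, which dies in the limit by the principal ideal theorem, and ${}_p\Br((X\sm S)_K)$, which by the Hasse principle embeds into $\bigoplus_{\p\in S(K)}{}_p\Br(K_\p)$ and dies because $k_S(p)$ realizes the maximal unramified $p$-extension of each $k_\p$, $\p\in S$. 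A repair of your class-field-theoretic sketch is possible, but with the other sequence and the other maps: reduce mod $p$ the exact sequence $\O_K^\times\otimes\Z_p\to\bigoplus_{\p\in S(K)}U_\p\otimes\Z_p\to\Gal(k_S(p)|K)^{\ab}\to\Pic(X_K)\otimes\Z_p\to 0$ and pass to the \emph{direct} limit along conorm/transfer maps; the local contribution vanishes by local Kummer theory (again $\zeta_p\in k_\p$ and (c)) and the class-group contribution by the principal ideal theorem, whence $D_1=0$. Note that neither the paper's argument nor this repair uses condition (b) directly --- which is a further sign that the universal-norm data you invoked is not what drives the duality.
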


\begin{proof} Condition (a) implies $H^3(G_{S}(p),\Z/p\Z) \stackrel{\sim}{\to} H^3_\et (X\sm S,\Z/p\Z)=0$. Hence $\cd\ G_{S}(p)\leq 2$. On the other hand, by (c),  the group $G_{S}(p)$ contains $\Gal(k_\p(p)|k_\p)$ as a subgroup for all $\p\in S$. As $\zeta_p\in k_\p$ for $\p\in S$, these local groups have cohomological dimension~$2$, hence so does  $G_{S}(p)$.

In order to show that $G_S(p)$ is a duality group, we have to show that
\[
D_i(G_S(p),\Z/p\Z): = \varinjlim_{\substack{U\subset G_S(p)\\ \cor^\vee}} H^i(U, \Z/p\Z)^\vee
\]
vanish for $i=0,1$, where $U$ runs through the open subgroups of $G_S(p)$ and the transition maps are the duals of the corestriction homomorphisms; see \cite{NSW}, 3.4.6. The vanishing of $D_0$ is obvious, as $G_S(p)$ is infinite.
Using (a), we therefore have to show that
\[
\varinjlim_{K\subset k_S(p)} H^1((X\sm S)_K,\Z/p\Z)^\vee=0.
\]
We put $ X=\Spec({\cal O}_k)$ and  denote the embedding by $j: (X\sm S)_K \to  X_K$. By the flat duality theorem of Artin-Mazur, we have natural isomorphisms
\[
H^1((X\sm S)_K,\Z/p\Z)^\vee\cong H^2_{\fl,c}((X\sm S)_K, \mu_p)= H^2_\fl(X_K, j_! \mu_p).
\]
The excision sequence together with a straightforward calculation of local cohomology groups shows an exact sequence
\[
\bigoplus_{\p \in S(K)} K_\p^\times/K_\p^{\times p} \to H^2_\fl(X_K, j_! \mu_p) \to H^2_\fl((X\sm S)_K, \mu_p).\leqno (\ast)
\]
As $\zeta_p\in k_\p$ and $k_S(p)_\p=k_\p(p)$ for $\p\in S$ by assumption, the left hand term of $(\ast)$ vanishes when passing to the limit over all $K$.  We use the Kummer sequence to obtain an exact sequence
\[
\Pic((X\sm S)_K)/p \lang H^2_\fl((X\sm S)_K, \mu_p) \lang \null_p \Br((X\sm S)_K).\leqno (\ast\ast)
\]
The left hand term of $(\ast\ast)$ vanishes in the limit by the principal ideal theorem. The Hasse principle for the Brauer group induces an injection
\[
\null_p \Br((X\sm S)_K) \hookrightarrow \bigoplus_{\p\in S(K)} \null_p\Br(K_\p).
\]
As $k_S(p)$ realizes the maximal unramified $p$-extension of $k_\p$ for $\p\in S$, the limit of the middle term in $(\ast\ast)$, and hence also the limit of then middle term in $(\ast)$ vanishes. This shows that $G_S(p)$ is a duality group of dimension $2$.
\end{proof}

\noindent
{\bf Remark:} The dualizing module can be calculated to
\[
D \cong \text{\rm tor}_{p}\big(C_S(k_S(p)\big),
\]
i.e.\ $D$ is isomorphic to the $p$-torsion subgroup in the $S$-id\`{e}le class group of $k_S(p)$. The proof is the same as in (\cite{circular}, Proof of Thm.\ 5.2), where we dealt with the tame case.

\vskip2cm

\noindent \footnotesize{Alexander Schmidt, NWF I - Mathematik, Universit\"{a}t Regensburg, D-93040
Regensburg, Deutschland. email: alexander.schmidt@mathematik.uni-regensburg.de}

\enddocument
\begin{thebibliography}{NSW2}

\bibitem[Kuz]{kuz} L.\ V.\ Kuz'min {\em Local extensions associated with $\ell$-extensions
with given ramification} (in Russian). Izv. Akad. Nauk
SSSR {\bf 39} (1975) no.\ 4, 739--772. English transl.\ in Math.\ USSR Izv.\ {\bf 9} (1975), no.\ 4, 693--726.

\bibitem[Lab]{La} J.~P.~Labute {\it Mild pro-$p$-groups and Galois groups of $p$-extensions of\/ $\Q$}.  J.~Reine und angew.~Math. {\bf 596} (2006), 155--182.

\bibitem[NSW]{NSW}
J.~Neukirch, A.~Schmidt, K.~Wingberg, \textit{Cohomology of Number Fields},
Grund\-lehren der math.\ Wiss.\ Bd.\ 323, Springer-Verlag 2000.

\bibitem[NSW$^2$]{NSW2}
J.~Neukirch, A.~Schmidt, K.~Wingberg, \textit{Cohomology of Number Fields, 2nd ed.},
Grund\-lehren der math.\ Wiss.\ Bd.\ 323, Springer-Verlag 2008.

\bibitem[Maz]{Ma} B. Mazur {\it Notes on \'{e}tale cohomology of number fields}.  Ann. Sci. \'{E}cole Norm. Sup. (4)  {\bf 6}  (1973), 521--552.

\bibitem[Mai]{maire} Ch. Maire, {\it Sur la dimension cohomologique des pro-$p$-extensions des corps de nombres}.   J. Th\'{e}or. Nombres Bordeaux  {\bf 17}  (2005), no.\ 2, 575--606.

\bibitem[Mil]{Mi} J.S. Milne {\it Arithmetic duality theorems}. Academic Press 1986.

\bibitem[Muk]{muk} V.\ G.\ Mukhamedov, {\em Local extensions associated with the $\ell$-extensions
of number fields with restricted ramification} (in Russian). Mat.\ Zametki {\bf 35} (1984), no.\ 4, 481--490,
English transl.\ in Math.\ Notes {\bf 35}, no.\ 3--4, 253--258.



\bibitem[Sch1]{circular} A.~Schmidt \textit{Circular sets of prime numbers and $p$-extension of the rationals}. J.~Reine und angew.~Math.\ {\bf 596} (2006), 115--130.

\bibitem[Sch2]{kpi1} A. Schmidt {\it Rings of integers of type $K(\pi,1)$}. Doc.\ Math.\ {\bf 12} (2007), 441--471.

\bibitem[Vog]{Vo} D.~Vogel, \textit{$p$-extensions with restricted ramification - the mixed case},
Preprints der Forschergruppe {\it Algebraische Zykel und $L$-Funktionen} Regensburg/Leipzig Nr. 11, 2007:
{\tt www.mathematik.uni-r.de/FGAlgZyk}

\bibitem[Wi1]{wing} K.~Wingberg {\it Galois groups of number fields generated
by torsion points of elliptic curves}. Nagoya Math. J. {\bf  104} (1986), 43--53.

\bibitem[Wi2]{wi2} K.~Wingberg {\it Riemann's existence theorem and the $K(\pi,1)$-property of rings of integers}. Preprint in preparation.

\end{thebibliography}
